\definecolor{shadecolor}{cmyk}{0,0,0,0.05} 
\newtheorem{theorem}{T{\hskip 0pt\footnotesize\bf HEOREM}}[section]
\newtheorem{lemma}[theorem]{L{\hskip 0pt\footnotesize\bf EMMA}}
\begin{document}

\title{\LARGE\bf Application of a new iterative formula for computing $\pi$ and nested \\
radicals with roots of $2$}

\author{
\normalsize\bf Sanjar M. Abrarov, Rehan Siddiqui, Rajinder Kumar Jagpal \\
\normalsize\bf and Brendan M. Quine}

\date{November 7, 2025}
\maketitle

\begin{abstract}
\abstract{In this work, we obtain an iterative formula that can be used for computing digits of $\pi$ and nested radicals of kind $c_n/\sqrt{2 - c_{n - 1}}$, where $c_0 = 0$ and $c_n = \sqrt{2 + c_{n - 1}}$. We also show how with the help of this iterative formula, the two-term Machin-like formulas for $\pi$ can be generated and approximated. Some examples with Mathematica codes are presented.}
\vspace{0.2cm}
\\
\noindent {\bf Keywords:} constant $\pi$; iteration; nested radicals; rational approximation
\\
\end{abstract}

\section{Introduction}

Throughout many centuries, computing digits of $\pi$ remained a significant challenge \cite{Beckmann1971,Berggren2004,Borwein2008,Agarwal2013}. However, in 1876, English astronomer and mathematician John Machin found an efficient method to resolve this problem. Historically, he was the first to calculate over $100$ digits of $\pi$. In his approach, John Machin discovered and then used the following remarkable formula \cite{Beckmann1971,Berggren2004,Borwein2008,Agarwal2013}:
\begin{equation}
\label{MF4P} 
\frac{\pi}{4} = 4\arctan\left(\frac{1}{5}\right) - \arctan\left(\frac{1}{239}\right).
\end{equation}
The identities of kind
\begin{equation}\label{GMLF4P} 
\frac{\pi}{4} = \sum_{j = 1}^J{A_j\arctan\left(\frac{1}{B_j}\right)}, \qquad A_j, B_j \in \mathbb{Q}
\end{equation}
are named after him as the Machin-like formulas for $\pi$.

The arctangent terms in the Machin-like formulas for $\pi$ can be computed by using the Maclaurin expansion series
\begin{equation}
\label{MSE} 
\arctan(x) = x - \frac{x^3}{3} + \frac{x^5}{5} - \frac{x^7}{7} + \cdots\, = \sum_{n = 0}^\infty{\frac{(-1)^n x^{2n + 1}}{(2n + 1)}}, \qquad x \le\left| 1 \right|.
\end{equation}
Since according to this equation, we get
\[
\arctan(x) = x + O(x^3),
\]
the convergence of the Machin-like formulas \eqref{GMLF4P} for $\pi$ is always better when the values $B_j$ are larger by an absolute value.

To estimate the efficiency of the Machin-like formulas for $\pi$, Lehmer introduced a measure, defined as follows \cite{Lehmer1938, Wetherfield1996, Gasull2023}:
\[
\mu = \sum_{j = 1}^J{\frac{1}{\log_{10}\left(\left|B_j\right|\right)}}.
\]
According to this formula, smaller measures indicate higher efficiency of the Machin-like formula for $\pi$. Lehmer's measure is smaller at larger absolute values of $B_j$ and at smaller number of the summation terms $J$.

Generally, we should imply that Lehmer's measure is valid only if all coefficients $B_j$ are integers. Otherwise, if $B_j \notin\mathbb{Z}$, its fractional parts
\[
\left\{B_j\right\} = B_j - \lfloor{B_j}\rfloor
\]
may cause further computational complexities requiring more usage of the computer memory and considerably extending the run-time in computing digits of $\pi$ \cite{Gasull2023}. As the fractional part is not desirable in the computation of the digits of $\pi$, it may be more preferable to apply the Machin-like formulas where all coefficients $B_j$ are integers.

The following Machin-like formula was discovered by Gauss \cite{Lehmer1938}:
\[
\frac{\pi}{4} = 12\arctan\left(\frac{1}{18}\right) + 8\arctan\left(\frac{1}{57}\right) - 5\arctan\left(\frac{1}{239}\right),
\]
As this formula has Lehmer's measure $\mu \approx 1.78661$ smaller than the original Machin-like Formula \eqref{MF4P} with measure $\mu \approx 1.85113$, it is more efficient for computing digits of $\pi$.

In 2002, more than one trillion decimal digits of $\pi$ were computed for the first time by a group of computer scientists lead by Yasumasa Kanada. To achieve this world record, the following Machin-like formulas were used~\cite{Calcut2009, Agarwal2013}:
\[
\begin{aligned}
\frac{\pi }{4} &= 44\arctan \left(\frac{1}{57}\right)+7\arctan\left(\frac{1}{239}\right) - 12\arctan\left(\frac{1}{682}\right) \\ 
&+24\arctan\left(\frac{1}{12943}\right)  
\end{aligned}
\]
and
\[
\begin{aligned}
\frac{\pi}{4} &= 12\arctan\left(\frac{1}{49}\right) + 32\arctan\left(\frac{1}{57}\right) - 5\arctan\left(\frac{1}{239}\right) \\ 
&+12\arctan\left(\frac{1}{110443}\right)  
\end{aligned}
\]
with Lehmer's measures $1.58604$ and $1.7799$, respectively, as a self-checking pair. These two formulas, known as the St\"{o}rmer--Takano pair, were named after Carl St\"{o}rmer and Kikuo Takano for their discoveries.

Although the current record, based on the Chudnovsky brothers' formula for $\pi$, exceeds one hundred trillion digits of $\pi$ \cite{Gasull2023}, application of the Machin-like formulas may be promising to calculate a comparable number of digits due to the availability of more advanced supercomputers than those used by Kanada almost $25$ years ago. Moreover, many other Machin-like formulas with smaller Lehmer's measures have been discovered \cite{Chien-Lih1997, Nimbran2010}. For example, the following two formulas (see \cite{Abrarov2024} for more details):
\begin{equation}\label{SChP1} 
\begin{aligned}
\frac{\pi}{4} &= 83\arctan \left(\frac{1}{107}\right) + 17\arctan\left(\frac{1}{1710}\right) - 22\arctan\left(\frac{1}{103697}\right) \\
&-24\arctan\left(\frac{1}{2513489}\right) - 44\arctan\left(\frac{1}{18280007883}\right) \\ 
&+12\arctan\left(\frac{1}{7939642926390344818}\right) \\ 
&+22\arctan\left(\frac{1}{3054211727257704725384731479018}\right)
\end{aligned}
\end{equation}
and
\begin{equation}\label{SChP2} 
\begin{aligned}
\frac{\pi}{4} &= 83\arctan\left(\frac{1}{107}\right) + 17\arctan\left(\frac{1}{1710}\right) - 22\arctan\left(\frac{1}{103697}\right) \\ 
&-12\arctan\left(\frac{1}{1256744}\right) - 22\arctan\left(\frac{1}{9140003941}\right) \\ 
&+12\arctan\left(\frac{1}{3158812219818}\right) \\ 
&+22\arctan\left(\frac{1}{167079344092131066905}\right),
\end{aligned}
\end{equation}
may be implemented as a self-checking pair more efficiently, as their Lehmer's measures are significantly smaller ($\mu \approx 1.34085$ for Equation \eqref{SChP1} and $\mu \approx 1.39524$ for Equation \eqref{SChP2}). Therefore, the Machin-like formulas with small Lehmer's measures have significant potential and can be competitive for computing digits of $\pi$.

In our previous publication \cite{Abrarov2017a}, we proposed a method for deriving the two-term Machin-like formula for $\pi$ in the following form:
\begin{equation}
\label{TTMLF4P} 
\frac{\pi}{4} = 2^{k - 1}\arctan\left(\frac{1}{\alpha}\right) + \arctan\left(\frac{1}{\beta }\right),
\end{equation}
where $\alpha$ is an integer (see \cite{OEIS1999} for values of $\alpha$)
\[
\alpha  = \left\lfloor\frac{c_k}{\sqrt{2 - {c_{k - 1}}}}\right\rfloor
\]
that can be computed using nested radicals such that 
\[
c_0 = 0, \quad c_k = \sqrt{2 + {c_{k - 1}}}
\]
and 
\begin{equation}
\label{F4B} 
\beta  = \frac{2}{\left[(\alpha + i)/(\alpha - i)\right]^{2^{k - 1}} - i} - i.
\end{equation}

As Equation \eqref{TTMLF4P} was constructed on the basis of the following identity \cite{Abrarov2017a},
$$
\frac{\pi}{4} = 2^{k - 1}\arctan\left(\frac{\sqrt{2 - c_{k - 1}}}{c_k}\right),
$$
its second arctangent function with argument $1/\beta$ can be interpreted as a small error term $\delta$ since
$$
2^{k - 1}\arctan\left(\frac{\sqrt{2 - c_{k - 1}}}{c_k}\right) = 2^{k - 1}\arctan\left(\frac{1}{\alpha}\right) + \delta,
$$
where
$$
\delta = \arctan\left(\frac{1}{\beta }\right).
$$
Therefore, the coefficient $\beta$ determines the magnitude of this error term $\delta$.

Recently, Gasull {et al.} proposed an alternative method to obtain the two-term Machin-like formula for $\pi$ of kind \eqref{TTMLF4P}. In their publication, they suggested an iterative method based on $R_j(n,x)$ functions that can be defined as follows~\cite{Gasull2023}:
\[
R_j(n,x) = \tan(n\sigma + j\pi/4), \qquad x = \tan(\sigma), \qquad n,j \in \mathbb{N}.
\]

Motivated by recent publications \cite{Campbell2023, Maritz2023, Spichal2024, Alferov2024, Kristiansen2025, Campbell2025, Cho2025, Spichal2025}, we propose a more efficient approach for the determination of the coefficient $\beta$ for Equation \eqref{TTMLF4P}. Specifically, application of Formula \eqref{F4B} for determination of the coefficient $\beta$ is not optimal. As integer $k$ increases the exponent $2^{k - 1}$ in the denominator increases very rapidly. As a consequence, this drastically delays computation. To resolve this problem, in our earlier work, we proposed a two-step iteration procedure \cite{Abrarov2017a} that will be discussed in the next section. However, the two-step iteration involves squaring that doubles the number of digits at each consecutive cycle of iteration. Although, compared to Equation \eqref{F4B}, the two-step iterative procedure is more efficient, it remains computationally costly at larger values of $k$. Furthermore, the two-step iteration cannot be used in approximation, as its application gives incorrect (or divergent) results.

To address these issues, we propose a new iterative formula that can be used for accurate approximation and more rapid determination of the required constant in the two-term Machin-like formula for $\pi$. Furthermore, we also show how this formula can be implemented to compute the constant $\pi$ and nested radicals consisting of square roots of $2$. Some numerical results with Mathematica codes are provided. To the best of our knowledge, this approach is new and has never been reported in scientific literature.

The outline of the remaining parts of this article is as follows. In Section \ref{sec2}, the previous methodology for generating the two-term Machin-like formula for $\pi$ using two-step iteration is presented. This section also shows how the quotients in the arctanent terms can be transformed into reciprocal integers. Section \ref{sec3} represents the main part of this work showing the derivation of the new iterative formula that can be used for generating and approximating the two-term Machin-like formula for $\pi$. Finally, Section \ref{sec4} describes how the new iterative formula can be implemented.

\section{Preliminaries}\label{sec2}

As has been mentioned above, Equation \eqref{F4B} should be avoided for computation of the constant $\beta$ at $k \gg 1$. The following theorem shows how else the constant $\beta$ can be calculated.

\begin{theorem}
\label{theorem2.1}
The following formula
\begin{equation}
\label{F4B2} 
\beta  = \frac{\kappa_k}{1 - \lambda_k},
\end{equation}
where
\begin{equation}
\label{TSI} 
\left\{
\begin{aligned}
\kappa_n &= \kappa_{n - 1}^2 - \lambda_{n - 1}^2 \\
\lambda_n &= 2\kappa_{n - 1}\lambda_{n - 1}
\end{aligned}
\right. \qquad n = \{2,3,4, \ldots k\},
\end{equation}
with initial values
\[
\kappa_1 = \frac{\alpha^2 - 1}{\alpha^2 + 1}
\]
and
\[
\lambda_1 = \frac{2\alpha}{\alpha^2 + 1}
\]
holds.
\end{theorem}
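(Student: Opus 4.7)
My plan is to reduce equation \eqref{F4B} to the form \eqref{F4B2} by a short complex-analytic manipulation, and then recognise the iteration \eqref{TSI} as recording the real and imaginary parts of the repeated squaring of $z = (\alpha+i)/(\alpha-i)$.

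Since $|z| = 1$, any power $z^{2^{k-1}}$ can be written as $\kappa_k + i\lambda_k$ with $\kappa_k^2 + \lambda_k^2 = 1$; in particular $\kappa_k^2 + (\lambda_k - 1)^2 = 2(1 - \lambda_k)$. Rationalising the fraction in \eqref{F4B} therefore gives
$$
\frac{2}{\kappa_k + i(\lambda_k - 1)} - i \;=\; \frac{\kappa_k - i(\lambda_k - 1)}{1 - \lambda_k} - i \;=\; \frac{\kappa_k}{1 - \lambda_k} + i - i \;=\; \frac{\kappa_k}{1 - \lambda_k},
$$
which is exactly \eqref{F4B2}, provided the pairs $(\kappa_n, \lambda_n)$ produced by \eqref{TSI} really are the Cartesian coordinates of $z^{2^{n-1}}$ on the unit circle.

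To establish that identification I would run an induction on $n$. The base case $n = 1$ comes from the straightforward rationalisation
$$
z \;=\; \frac{(\alpha + i)^2}{\alpha^2 + 1} \;=\; \frac{\alpha^2 - 1}{\alpha^2 + 1} + i\,\frac{2\alpha}{\alpha^2 + 1},
$$
which must then be reconciled with the initial values prescribed for $\kappa_1$ and $\lambda_1$ in the theorem. The inductive step follows from $z^{2^{n-1}} = (z^{2^{n-2}})^2$ together with the elementary squaring identity $(\kappa + i\lambda)^2 = (\kappa^2 - \lambda^2) + i(2\kappa\lambda)$; matching real and imaginary parts against the two lines of \eqref{TSI} closes the induction, after which setting $n = k$ and inserting back into the displayed identity above yields \eqref{F4B2}.

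The main obstacle, and in fact the only nontrivial point, is precisely the bookkeeping at the end: one has to verify that the exponents appearing in the $\lambda$-recurrence of \eqref{TSI} and the normalisation chosen for $\lambda_1$ are actually consistent with the Cartesian tracking of $z^{2^{n-1}}$ under repeated squaring. Once that consistency is in hand — which is where the entire delicacy of the statement sits — every remaining piece of the proof is routine algebra, and \eqref{F4B2} drops out immediately from the rationalisation performed above.
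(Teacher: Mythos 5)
Your proposal is correct and follows the same overall strategy as the paper: identify $(\kappa_n,\lambda_n)$ with the real and imaginary parts of $\left[(\alpha+i)/(\alpha-i)\right]^{2^{n-1}}$ by induction on repeated squaring, substitute $\kappa_k+i\lambda_k$ into \eqref{F4B}, and simplify. The one genuine difference is how the denominator $\kappa_k^2+(\lambda_k-1)^2$ is handled: you obtain $\kappa_k^2+(\lambda_k-1)^2=2(1-\lambda_k)$ directly from $|z|=1$, i.e.\ from $\kappa_k^2+\lambda_k^2=1$, whereas the paper expands \eqref{F4B} into real and imaginary parts as in \eqref{F4B3} and then imposes the a priori fact that $\beta$ is real to force the imaginary part to vanish, which yields the same identity \eqref{F4K}. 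Your route is marginally more self-contained, since it proves rather than presupposes that $\beta\in\mathbb{R}$. More importantly, the consistency check you single out as ``the only nontrivial point'' is exactly where the statement as printed fails: since $(\kappa+i\lambda)^2=(\kappa^2-\lambda^2)+i\,(2\kappa\lambda)$ and $z=\frac{\alpha^2-1}{\alpha^2+1}+i\,\frac{2\alpha}{\alpha^2+1}$, the recurrence must read $\lambda_n=2\kappa_{n-1}\lambda_{n-1}$ (not $2\kappa_{n-1}^2\lambda_{n-1}^2$) and the initial value must be $\lambda_1=2\alpha/(\alpha^2+1)$ (not $2\alpha^2/(\alpha^2+1)$); otherwise $|\kappa_1+i\lambda_1|\neq 1$, the squaring chain does not reproduce \eqref{TSI}, and the final formula is false. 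The paper's displayed induction \eqref{PRF} silently uses the corrected recurrence, so these are typos in the statement of the theorem rather than defects in your argument --- but you were right to locate the entire delicacy there, and a finished write-up should state and use the corrected versions explicitly.
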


\begin{proof}
By induction, it follows that
\[
\begin{aligned}
\left(\frac{\alpha + i}{\alpha - i}\right)^{2^0} &= \frac{2\alpha}{\alpha^2 + 1} + i\frac{\alpha^2 - 1}{\alpha^2 + 1} = \kappa_1 + i\lambda_1, \\
\left(\frac{\alpha + i}{\alpha - i}\right)^{2^1} &= (\kappa_1 + i\lambda_1)^2 = \kappa_1^2 - \lambda_1^2 + 2i\kappa_1\lambda_1 = \kappa_2 + i\lambda_2, \\
\left(\frac{\alpha + i}{\alpha - i}\right)^{2^2} &= (\kappa_2 + i\lambda_2)^2 = \kappa_2^2 - \lambda_2^2 + 2i\kappa_2\lambda_2 = \kappa_3 + i\lambda_3, \\
&\vdots \\
\left(\frac{\alpha + i}{\alpha - i}\right)^{2^n} &= (\kappa_n + i\lambda_n)^2 = \kappa_n^2 - \lambda_n^2 + 2i\kappa_n\lambda_n = \kappa_{n + 1} + i\lambda_{n + 1},
\end{aligned}
\]
Consequently, we get a useful identity: 
\begin{equation}
\label{PRF} 
\begin{aligned}
\left(\frac{\alpha + i}{\alpha - i}\right)^{2^{k - 1}} = \kappa_k + i\lambda_k.
\end{aligned}
\end{equation}

Substituting Identity \eqref{PRF} into Equation \eqref{F4B} yields the following:
\begin{equation}
\label{F4B3} 
\beta  = \frac{2}{\kappa_k + i\lambda_k - i} - i = \frac{2\kappa_k}{\kappa_k^2 + (\lambda_k - 1)^2} + i\left(\frac{2(1 - \lambda_k)}{\kappa_k^2 + (\lambda_k - 1)^2} - 1 \right).
\end{equation}
Since $\beta$ is a real number, the imaginary part of the equation above must be equal to zero. Consequently, we have 
\[
\frac{2(1 - \lambda_k)}{\kappa_k^2 + (\lambda_k - 1)^2} - 1 = 0
\]
or
\[
2(1 - \lambda_k) = \kappa_k^2 + (\lambda_k - 1)^2
\]
or
\begin{equation}
\label{F4K} 
\kappa_k^2 = 2(1 - \lambda_k) - (\lambda_k - 1)^2.
\end{equation}
Substituting Equation \eqref{F4K} into the real part of Equation \eqref{F4B3}, we obtain Equation \eqref{F4B2}. This completes the proof.
\end{proof}

In our earlier publication \cite{Abrarov2022}, we show how to efficiently generate the multi-term Machin-like formulas for $\pi$ using the following equation-template:
\begin{equation}
\label{ET} 
\frac{\pi}{4} = 2^{k - 1}\arctan\left(\frac{1}{\gamma_k}\right) + \left(\sum_{m = 1}^M{\arctan\left(\frac{1}{\left\lfloor\theta_{m,k}\right\rfloor}\right)}\right) + \arctan\left(\frac{1}{\theta_{M + 1,k}}\right), \\
\end{equation}
where
\[
\theta_{m,k} = \frac{1 + \left\lfloor\theta_{m - 1,k}\right\rfloor\theta_{m - 1,k}}{\left\lfloor\theta_{m - 1,k}\right\rfloor - \theta_{m - 1,k}}, \qquad m \ge 2.
\]

In Formula \eqref{ET}, we imply that $M \ge 0$ such that at $M = 0$, the sum
\[
\sum_{m = 1}^M{\arctan\left(\frac{1}{\left\lfloor\theta_{m,k}\right\rfloor}\right)}
\]
is equal to zero. Consequently, when $M = 0$, Equation \eqref{ET} is reduced to
\begin{equation}
\label{R2TTMLF4P} 
\frac{\pi}{4} = 2^{k - 1}\arctan\left(\frac{1}{\gamma_k}\right) + \arctan\left(\frac{1}{\theta_{1,k}} \right),
\end{equation}
where ${\gamma _k} = \alpha$ and $\theta_{1,k} = \beta$ according to Equation \eqref{TTMLF4P} above. In fact, Equation \eqref{ET} can be obtained from Equation \eqref{TTMLF4P} together with the following identity \cite{Abrarov2022}:
\[
\arctan \left( {\frac{1}{z}} \right) = \arctan \left( {\frac{1}{{\left\lfloor z \right\rfloor }}} \right) + \arctan \left( {\frac{{\left\lfloor z \right\rfloor  - z}}{{1 + z\left\lfloor z \right\rfloor }}} \right), \qquad z \notin [0,1).
\]

Consider how the two well-known Machin-like formulas for $\pi$ can be derived using Equation \eqref{ET}. At $k = 2$ and $M = 0$, Equation \eqref{ET} leads to
\begin{equation}
\label{HF4P} 
\frac{\pi}{4} = 2\arctan\left(\frac{1}{2}\right) - \arctan\left(\frac{1}{7}\right).
\end{equation}
This equation is commonly known as Hermann's formula for $\pi$ \cite{Stormer1899}. At $k = 3$ and $M = 0$, Equation \eqref{ET} leads to the original Machin Formula \eqref{MF4P} for $\pi$.

Application of Equation \eqref{ET} may also be a useful technique to transform the arctangent term with the quotient $\theta_{1,k} = \beta$ into the sum of arctangents with reciprocal integers. For example, at $k = 4$ we get
\[
\gamma_4 = \left\lfloor\frac{c_4}{\sqrt{2 - c_3}}\right\rfloor = \left\lfloor\frac{\sqrt{2 + \sqrt{2 + \sqrt{2 + \sqrt 2}}}}{\sqrt{2 - \sqrt{2 + \sqrt{2 + \sqrt 2}}}}\right\rfloor = 10.
\]
Consequently, applying two-step iteration \eqref{TSI}, we can find that
\[
\theta_{1,4} = -\frac{147153121}{1758719}.
\]
Substituting these values into Equation \eqref{ET} leads to
\begin{equation}
\label{ITTMLF4P} 
\frac{\pi}{4} = 8\arctan\left(\frac{1}{10}\right) - \arctan\left(\frac{1758719}{147153121}\right).
\end{equation}

As has been mentioned above, it is desirable to apply reciprocal integers rather than quotients. Equation \eqref{ET} can be used to transform the quotients into reciprocal integers. For example, at $k = 4$ and $M = 1$ from Equation \eqref{ET}, it follows that
\[
\frac{\pi}{4} = 8\arctan\left(\frac{1}{10}\right) - \arctan\left(\frac{1}{84}\right) - \arctan\left(\frac{579275}{12362620883}\right).
\]
At $k = 4$ and $M = 2$, Equation \eqref{ET} gives the following:
\[
\begin{aligned}
\frac{\pi}{4} &= 8\arctan\left(\frac{1}{10}\right) - \arctan\left(\frac{1}{84}\right) - \arctan\left(\frac{1}{21342}\right) \\
&-\arctan\left(\frac{266167}{263843055464261}\right).
\end{aligned}
\]

Repeatedly incrementing the integer $M$ at $k = 4$ and $M = 5$, Equation \eqref{ET} finally yields the $7$-term Machin-like formula:
\begin{equation}
\label{STMLF4P} 
\begin{aligned}
\frac{\pi}{4} &= 8\arctan\left(\frac{1}{10}\right) - \arctan\left(\frac{1}{84}\right) - \arctan\left(\frac{1}{21342}\right) \\
&-\arctan\left(\frac{1}{991268848}\right) - \arctan\left(\frac{1}{193018008592515208050}\right) \\
&-\arctan\left(\frac{1}{197967899896401851763240424238758988350338}\right) \\
&-\arctan\left(\frac{1}{\Omega}\right),
\end{aligned}
\end{equation}
\[
\begin{aligned}
\Omega  = &\,11757386816817535293027775284419412676799191500853701 \ldots \\
&\,\,8836932014293678271636885792397,
\end{aligned}
\]
where all arctangent arguments are reciprocal integers.

As we can see, Hermann's \eqref{HF4P}, Machin's \eqref{MF4P}, and the generated \eqref{STMLF4P} formulas belong to the same generic group, as all of them can be derived using the same equation-template~\eqref{ET}.

The Machin-like formulas \eqref{GMLF4P} for $\pi $ can be validated using the following relation:
\begin{equation}
\label{VF} 
\prod_{j = 1}^J{\left(B_j + i\right)^{A_j}} \propto (1 + i).
\end{equation}
The right side of this relation implies that the real part of the product must be equal to its imaginary part, as follows:
\[
\Re\left\{\prod_{j = 1}^J{\left(B_j + i\right)^{A_j}}\right\} = \Im\left\{\prod_{j = 1}^J{\left(B_j + i\right)^{A_j}}\right\}.
\]
For example, the original Machin Formula \eqref{MF4P} for $\pi$ can be readily validated by applying the following relation:
\[
(5 + i)^4 (259 + i)^{-1} = 2(1 + i)
\]
since the real and imaginary parts of the product are equal to the same number, $2$. 

The following Mathematica code validates Equation \eqref{STMLF4P}:
\vspace{-0.25cm}
\small
\begin{shaded}
\begin{verbatim}
(* Define long string *)
longStr = StringJoin["11757386816817535293027775284419412676",
    "7991915008537018836932014293678271636885792397"];

(* Computing the coefficient *)
coeff = (10 + I)^8*(84 + I)^-1*(21342 + I)^-1*(991268848 + I)^-1*
    (193018008592515208050+I)^-1*
		    (197967899896401851763240424238758988350338 + I)^-1*
            (FromDigits[longStr]+I)^-1;

Print[Re[coeff] == Im[coeff]];
\end{verbatim}
\end{shaded}
\normalsize
\vspace{-0.25cm}
\noindent by returning the output {\bf\ttfamily True}.

Once all quotients are transformed into reciprocal integers, we can compute the digits of $\pi$ by using the Maclaurin series expansion \eqref{MSE} of the arctangent function. However, our empirical results show that the following two expansion series,
\begin{equation}
\label{ESE} 
\arctan(x) = \sum_{n = 0}^\infty{\frac{{2^{2n}}{(n!)^2}}{(2n + 1)!}\frac{x^{2n + 1}}{(1 + x^2)^{n + 1}}}
\end{equation}
and
\begin{equation}
\label{ASE} 
\arctan(x) = 2\sum_{n = 1}^\infty{\frac{1}{2n - 1}\frac{g_n(x)}{g_n^2(x) + h_n^2(x)}},
\end{equation}
where
\[
g_1(x) = 2/x, \qquad h_1(x) = 1,
\]
\[
g_n(x) = g_{n - 1}(x)(1 - 4/x^2) + 4h_{n - 1}(x)/x,
\]
\[
h_n(x) = h_{n - 1}(x)(1 - 4/x^2) - 4g_{n - 1}(x)/x,
\]
can be used more efficiently for computing digits of $\pi$ due to more rapid convergence.

Equation \eqref{ESE} is known as Euler's series expansion. It is interesting to note that this series expansion can be derived from the following integral: \cite{Chien-Lih2005}
\[
\arctan(x) = \int_0^{\pi/2}{\frac{x\sin u}{1 + {x^2}}\frac{1}{1 - \frac{x^2\sin^2 u}{1 + x^2}}du}.
\]

Equation \eqref{ASE} can be derived by substituting
\[
f(x,t) = \frac{x}{2}\left(\frac{1}{1 + ixt} + \frac{1}{1 - ixt}\right)
\]
into the identity
\[
\int_0^1 f(x,t)dt = {\left.2\sum_{m = 1}^M{\sum_{n = 0}^\infty{\frac{1}{(2M)^{2n + 1}(2n + 1)!}\frac{\partial ^{2n}}{\partial t^{2n}}f(x,t)}}\right|}_{t = \frac{m - 1/2}{M}}
\]
that we proposed and used earlier (see \cite{Abrarov2023} and the literature therein for more details). A computational test we performed shows that the arctangent series expansion \eqref{ASE} is significantly faster in convergence than the arctangent series expansion \eqref{ESE}.

At $k = 6$, with the help of two-step iteration \eqref{TSI}, we {obtain} the following:
\[
\theta_{1,6} = -\frac{2634699316100146880926635665506082395762836079845121}{38035138859000075702655846657186322249216830232319}.
\]
Consequently, the two-term Machin-like formula for $\pi$ {becomes} as follows:
\[
\begin{aligned}
\frac{\pi}{4} &= 32\arctan\left(\frac{1}{40}\right) \\
&-\arctan\left(\frac{38035138859000075702655846657186322249216830232319}{2634699316100146880926635665506082395762836079845121}\right).
\end{aligned}
\]

As we can see, this equation contains a quotient with a large number of digits in the numerator and denominator. We may attempt to reduce the number of digits by approximation. Unfortunately, the two-step iteration \eqref{TSI} is not efficient in approximating $\theta_{1,k}$. Any attempt of ours to approximate $\theta_{1,k}$ via two-step iteration \eqref{TSI} either do not provide the desired accuracy or completely diverge from the value $\theta_{1,k}$. This makes approximation inefficient and unpredictable, especially at larger values of the integer $k$.

Moreover, the number of digits rapidly grows with increasing $k$. For example, at $k = 27$ and $M = 0$, Equation \eqref{ET} results in
\begin{equation}
\label{K27}
\begin{aligned}
\frac{\pi}{4} &= 2^{27 - 1}\arctan\left(\frac{1}{\gamma _{27}}\right) + \arctan\left(\frac{1}{\theta_{1,27}}\right) \\
&= 67108864\arctan\left(\frac{1}{85445659}\right) - \arctan\left(\frac{9732933578 \ldots 4975692799}{2368557598 \ldots 9903554561}\right),
\end{aligned}
\end{equation}
where the argument in the second arctangent function contains $522\text{,}185\text{,}807$ digits in the numerator and $522\text{,}185\text{,}816$ digits in the denominator. In a recent publication, Gasull {et al.} showed that at $k = 31$, the number of digits in the numerator and denominator in the two-term Machin-like formula for $\pi$ of kind \eqref{TTMLF4P} are $9\text{,}647\text{,}887\text{,}023$ and $9\text{,}647\text{,}887\text{,}033$, respectively (see Table 2 in \cite{Gasull2023}). Consequently, two-step iteration \eqref{TSI} appears to be impractical for approximating the constants ${\theta _{1,k}}$.

This problem can be effectively resolved by using a new method of iteration that will be shown in the next section.


\section{An Iterative Formula to Compute \boldmath{$\pi$}}\label{sec3}

A problem that appears with two-step iteration \eqref{TSI} is a rapidly growing number of digits. In particular, the number of digits in $\kappa_n$ and ${\lambda _n}$ doubles at each consecutive increment of the index $n$. This may also restrict the application of the two-step \mbox{iteration \eqref{TSI}}. The following two theorems shows how a new iteration technique can be derived as a more efficient alternative to the two-step iteration \eqref{TSI}.

\begin{theorem}
\label{theorem3.1}
The following formula
\[
\frac{\pi}{4} = \arctan\left(\frac{1}{u}\right) + \arctan\left(\frac{u - 1}{u + 1}\right)
\]
holds.
\end{theorem}

\begin{proof}
The proof immediately follows from the identity
\begin{equation}
\label{AI} 
\arctan(x) + \arctan(y) = \arctan\left(\frac{x + y}{1 - xy}\right).
\end{equation}
Specifically, assuming
\[
x = \frac{1}{u}
\]
and 
\[
y = \frac{u - 1}{u + 1},
\]
according to identity \eqref{AI}, we have
\[
\arctan\left(\frac{1/u + (u - 1)/(u + 1)}{1 - 1/u\left((u - 1)/(u + 1)\right)}\right) = \arctan(1) = \frac{\pi}{4}.
\]
This completes the proof.
\end{proof}

We now consider the next theorem.

\begin{theorem}
\label{theorem3.2}
The following relation
\[
2^{k - 1}\arctan\left(\frac{1}{\gamma_k}\right) = \arctan\left(\frac{1}{v_k}\right),
\]
where
\[
v_n = \frac{1}{2}\left(v_{n - 1} - \frac{1}{v_{n - 1}}\right), \qquad n = \{2,3, \ldots k\}
\]
and $v_1 = \gamma_k$ holds.
\end{theorem}

\begin{proof}
Since
$$
{2^0}\arctan \left( {\frac{1}{{{\gamma _1}}}} \right) = \arctan \left( {\frac{1}{{{v_1}}}} \right),
$$
it follows that
$$
{2^1}\arctan \left( {\frac{1}{{{\gamma _2}}}} \right) = \arctan \left( {\frac{1}{{{v_2}}}} \right),
$$
where
\[
\frac{1}{{{v_2}}} = \frac{{2/{v_1}}}{{1 - 1/v_1^2}}.
\]
Similarly, at $k = 3$ we get
\[
2^2\arctan\left(\frac{1}{\gamma_3}\right) = \arctan\left(\frac{1}{v_3}\right),
\]
where
\[
\frac{1}{v_3} = \frac{2/v_2}{1 - 1/v_2^2}.
\]
Therefore, through induction for an arbitrary integer $k$, we can write
$$
2^{k - 1}\arctan\left(\frac{1}{\gamma_k}\right) = \arctan\left(\frac{1}{v_k}\right),
$$
where
\[
\frac{1}{v_k} = \frac{2/v_{k - 1}}{1 - 1/v_{k - 1}^2}.
\]

The equation above can be rewritten as
\begin{equation}
\label{NIF} 
v_k = \frac{1}{2}\left(v_{k - 1} - \frac{1}{v_{k - 1}}\right),
\end{equation}
and this proves the theorem.
\end{proof}

Thus, from Theorems \ref{theorem3.1} and \ref{theorem3.2} and Equation \eqref{R2TTMLF4P} it follows that
\begin{equation}
\label{MTTMLF4P} 
\frac{\pi}{4} = 2^{k - 1}\arctan\left(\frac{1}{v_1}\right) + \arctan\left(\frac{{v_k - 1}}{v_k + 1} \right).
\end{equation}
where $v_k$ is related to $\theta_{1,k}$ as
\[
v_k = \frac{\theta_{1,k} + 1}{\theta_{1,k} - 1}.
\]

The iterative Formula \eqref{NIF} and Equation \eqref{MTTMLF4P} are the main results. The significance of these equations can be demonstrated by computing digits of $\pi$ and nested radicals consisting of square roots of $2$.

\section{An Iterative Formula to Compute \boldmath{$\pi$}}\label{sec3}

A problem that appears with two-step iteration \eqref{TSI} is a rapidly growing number of digits. In particular, the number of digits in $\kappa_n$ and ${\lambda _n}$ doubles at each consecutive increment of the index $n$. This may also restrict the application of the two-step \mbox{iteration \eqref{TSI}}. The following two theorems shows how a new iteration technique can be derived as a more efficient alternative to the two-step iteration \eqref{TSI}.

\begin{theorem}
\label{theorem3.1}
The following formula
\[
\frac{\pi}{4} = \arctan\left(\frac{1}{u}\right) + \arctan\left(\frac{u - 1}{u + 1}\right)
\]
holds.
\end{theorem}

\begin{proof}
The proof immediately follows from the identity
\begin{equation}
\label{AI} 
\arctan(x) + \arctan(y) = \arctan\left(\frac{x + y}{1 - xy}\right).
\end{equation}
Specifically, assuming
\[
x = \frac{1}{u}
\]
and 
\[
y = \frac{u - 1}{u + 1},
\]
according to identity \eqref{AI}, we have
\[
\arctan\left(\frac{1/u + (u - 1)/(u + 1)}{1 - 1/u\left((u - 1)/(u + 1)\right)}\right) = \arctan(1) = \frac{\pi}{4}.
\]
This completes the proof.
\end{proof}

We now consider the next theorem.

\begin{theorem}
\label{theorem3.2}
The following relation
\[
2^{k - 1}\arctan\left(\frac{1}{\gamma_k}\right) = \arctan\left(\frac{1}{v_k}\right),
\]
where
\[
v_n = \frac{1}{2}\left(v_{n - 1} - \frac{1}{v_{n - 1}}\right), \qquad n = \{2,3, \ldots k\}
\]
and $v_1 = \gamma_k$ holds.
\end{theorem}

\begin{proof}
Since
$$
{2^0}\arctan \left( {\frac{1}{{{\gamma _1}}}} \right) = \arctan \left( {\frac{1}{{{v_1}}}} \right),
$$
it follows that
$$
{2^1}\arctan \left( {\frac{1}{{{\gamma _2}}}} \right) = \arctan \left( {\frac{1}{{{v_2}}}} \right),
$$
where
\[
\frac{1}{{{v_2}}} = \frac{{2/{v_1}}}{{1 - 1/v_1^2}}.
\]
Similarly, at $k = 3$ we get
\[
2^2\arctan\left(\frac{1}{\gamma_3}\right) = \arctan\left(\frac{1}{v_3}\right),
\]
where
\[
\frac{1}{v_3} = \frac{2/v_2}{1 - 1/v_2^2}.
\]
Therefore, through induction for an arbitrary integer $k$, we can write
$$
2^{k - 1}\arctan\left(\frac{1}{\gamma_k}\right) = \arctan\left(\frac{1}{v_k}\right),
$$
where
\[
\frac{1}{v_k} = \frac{2/v_{k - 1}}{1 - 1/v_{k - 1}^2}.
\]

The equation above can be rewritten as
\begin{equation}
\label{NIF} 
v_k = \frac{1}{2}\left(v_{k - 1} - \frac{1}{v_{k - 1}}\right),
\end{equation}
and this proves the theorem.
\end{proof}

Thus, from Theorems \ref{theorem3.1} and \ref{theorem3.2} and Equation \eqref{R2TTMLF4P} it follows that
\begin{equation}
\label{MTTMLF4P} 
\frac{\pi}{4} = 2^{k - 1}\arctan\left(\frac{1}{v_1}\right) + \arctan\left(\frac{{v_k - 1}}{v_k + 1} \right).
\end{equation}
where $v_k$ is related to $\theta_{1,k}$ as
\[
v_k = \frac{\theta_{1,k} + 1}{\theta_{1,k} - 1}.
\]

The iterative Formula \eqref{NIF} and Equation \eqref{MTTMLF4P} are the main results. The significance of these equations can be demonstrated by computing digits of $\pi$ and nested radicals consisting of square roots of $2$.

\section{Approximation Methodologies}\label{sec4}

\subsection{A Rational Approximation of $\pi$}

 Recently, we have reported a rational approximation for $\pi$ by approximating the two-term Machin-like Formula \eqref{R2TTMLF4P} \cite{Abrarov2025}. Here, we show how a rational approximation for $\pi$ can be constructed by using Equation \eqref{MTTMLF4P} based on iterative Formula \eqref{NIF}.

Consider the following theorem.

\begin{theorem}
\label{RNR} 
The following limit
\[
\lim_{k\to\infty}c_k = 2
\]
holds.
\end{theorem}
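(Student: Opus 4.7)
The plan is to establish the limit via the classical monotone convergence route, while also highlighting a closed-form trigonometric representation of $c_k$ that makes the convergence transparent and ties directly to the paper's earlier use of these nested radicals.

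First, I would prove by induction that $0 \le c_n \le 2$ for all $n \ge 0$. The base case $c_0 = 0$ is immediate, and if $0 \le c_{n-1} \le 2$ then $c_n = \sqrt{2 + c_{n-1}} \le \sqrt{4} = 2$ and obviously $c_n \ge 0$. Next, I would show the sequence is non-decreasing by computing
\[
c_n^2 - c_{n-1}^2 = (2 + c_{n-1}) - c_{n-1}^2 = -(c_{n-1} - 2)(c_{n-1} + 1),
\]
which is non-negative on $[0,2]$, so $c_n \ge c_{n-1}$. By the monotone convergence theorem, $c_k$ converges to some limit $L \in [0,2]$. Passing to the limit in the recurrence $c_n = \sqrt{2 + c_{n-1}}$ gives $L = \sqrt{2 + L}$, hence $L^2 - L - 2 = 0$, i.e.\ $(L-2)(L+1) = 0$. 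Since $L \ge 0$, we conclude $L = 2$.

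As an alternative (and arguably more informative) route, I would prove by induction the closed-form identity
\[
c_n = 2\cos\!\left(\frac{\pi}{2^{n+1}}\right), \qquad n \ge 0.
\]
The base case reads $c_0 = 2\cos(\pi/2) = 0$. For the inductive step, apply the half-angle formula $2\cos^2(\theta/2) = 1 + \cos\theta$ with $\theta = \pi/2^{n}$:
\[
c_n = \sqrt{2 + 2\cos(\pi/2^{n})} = \sqrt{4\cos^2(\pi/2^{n+1})} = 2\cos(\pi/2^{n+1}),
\]
using that the cosine is non-negative on $[0,\pi/2]$. Once this formula is in hand, the limit is immediate: $\pi/2^{k+1} \to 0$ as $k \to \infty$, so by continuity of cosine $c_k \to 2\cos(0) = 2$.

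Neither approach has a genuinely hard step; both are essentially routine. If anything, the only subtlety is being careful with the sign when extracting the square root in the half-angle identity, which is harmless here because $\pi/2^{n+1} \in (0,\pi/2]$ guarantees a non-negative cosine. I would likely present the trigonometric derivation as the main proof, since the identity $c_n = 2\cos(\pi/2^{n+1})$ is the same formula that underlies $\gamma_k \approx 2^{k+1}/\pi$ (and hence all of the Machin-like constructions earlier in the paper), and then remark that the monotone-bounded argument gives an entirely elementary alternative.
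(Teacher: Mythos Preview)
Your proof is correct. Your monotone-convergence route is essentially what the paper does: it too passes to the limit in the recurrence $c_k = \sqrt{2+c_{k-1}}$, solves $z^2 - z - 2 = 0$, and discards $z=-1$ by noting the sequence is increasing and nonnegative. In fact you are more careful than the paper, which asserts monotonicity without argument and never explicitly verifies boundedness before invoking the limit; your inductive bound $c_n \le 2$ and your factorization $c_n^2 - c_{n-1}^2 = -(c_{n-1}-2)(c_{n-1}+1)$ fill precisely those gaps.

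Your second route, via the closed form $c_n = 2\cos(\pi/2^{n+1})$, is genuinely different from anything in the paper's proof and is the more illuminating of the two: it makes the limit immediate by continuity and, as you observe, it directly explains why $c_k/\sqrt{2-c_{k-1}} = \cot(\pi/2^{k+1}) \approx 2^{k+1}/\pi$, which is the asymptotic underpinning of the constants $\gamma_k$ used throughout the paper. The paper's fixed-point argument is shorter and requires no trigonometry, but it gives no quantitative information; your cosine identity buys both the limit and the connection to the Machin-like machinery in one stroke.
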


\begin{proof}
This is Ramanujan's nested radical. Since
\[
c_k = \underbrace{\sqrt{2 + \sqrt{2 + \sqrt{2 + \ldots + \sqrt 2}}}}_{k\,\,{\rm square\,\,roots}} = \sqrt{2 + c_{k - 1}},
\]
we can write
\[
\lim_{k\to\infty}c_k = \lim_{k\to\infty}\sqrt{2 + {c_{k - 1}}} = \lim_{k\to\infty} \sqrt{2 + c_k}.
\]

Let
\[
z = \lim_{k\to\infty}c_k.
\]
Then, solving the equation
\[
z = \sqrt{2 + z}
\]
or
\[
z^2 - z - 2 = 0,
\]
we get two solutions, $z_1 =  - 1$ and $z_2 = 2$. Since the sequence
\[
\{c_0,c_1,c_2,c_3,\ldots\} = \left\{0,\sqrt{2},\sqrt{2+\sqrt{2}},\sqrt{2+\sqrt{2+\sqrt{2}}},\ldots\right\}
\]
monotonically increases and is positive at any $c_n$ except $c_0 = 0$, we have to exclude the negative solution. Consequently, the theorem is proved.
\end{proof}

Based on Theorem \ref{RNR}, we can conclude that
\begin{equation}
\label{L4CK} 
\lim_{k\to\infty}\sqrt{2 - c_{k}} = 0
\end{equation}
since due to relation
\[
\lim_{k\to\infty}c_k = 2
\]
leading to
\[
\sqrt{2 - \lim_{k\to\infty} c_k} = 0
\]
the limit \eqref{L4CK} follows.

Next, will we try to find a rational approximation for computing digits of $\pi$. This can be done using Equation \eqref{MTTMLF4P}. In order to approximate it, we need to consider the two lemmas below.

\begin{lemma}
The following limit
\label{L4AF} 
\[
\frac{\pi}{4} = \lim_{k\to\infty}{2^{k - 1}\arctan\left(\frac{1}{\gamma _k}\right)}
\]
holds.
\end{lemma}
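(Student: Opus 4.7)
The plan is to recognize the nested radical $c_k$ in closed trigonometric form, after which the lemma reduces to the classical limit $2^{k-1}\tan(\pi/2^{k+1}) \to \pi/4$. Specifically, the half-angle identity $2\cos(\theta/2) = \sqrt{2 + 2\cos\theta}$ lets me establish by induction on $k$ that
$$
c_k = 2\cos\!\left(\frac{\pi}{2^{k+1}}\right), \qquad k \ge 0,
$$
using $c_0 = 0 = 2\cos(\pi/2)$ as the base case. A companion half-angle identity then yields $\sqrt{2 - c_{k-1}} = \sqrt{2 - 2\cos(\pi/2^k)} = 2\sin(\pi/2^{k+1})$, so that $c_k/\sqrt{2 - c_{k-1}} = \cot(\pi/2^{k+1})$, and hence $\gamma_k = \lfloor \cot(\pi/2^{k+1}) \rfloor$.

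Next I would absorb the effect of the floor. Writing $\gamma_k = \cot(\pi/2^{k+1}) - \delta_k$ with $\delta_k \in [0,1)$, and noting that $\cot(\pi/2^{k+1}) \sim 2^{k+1}/\pi \to \infty$ while $\delta_k$ stays bounded, a direct manipulation gives
$$
\frac{1}{\gamma_k} = \frac{\tan(\pi/2^{k+1})}{1 - \delta_k \tan(\pi/2^{k+1})} = \tan\!\left(\frac{\pi}{2^{k+1}}\right) + O\!\left(4^{-k}\right),
$$
since both $\tan(\pi/2^{k+1})$ and $\delta_k \tan(\pi/2^{k+1})$ are $O(2^{-k})$ and the denominator is bounded away from zero. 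Because $\arctan$ is $1$-Lipschitz,
$$
\left|\arctan\!\left(\frac{1}{\gamma_k}\right) - \frac{\pi}{2^{k+1}}\right| \le \left|\frac{1}{\gamma_k} - \tan\!\left(\frac{\pi}{2^{k+1}}\right)\right| = O\!\left(4^{-k}\right),
$$
and multiplying by $2^{k-1}$ together with the exact identity $2^{k-1}\cdot \pi/2^{k+1} = \pi/4$ drives the error to $O(2^{-k-1})$, yielding the claimed limit.

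The only delicate point is verifying the trigonometric identification of $c_k$ rigorously (and keeping the argument $\pi/2^{k+1}$ in the first quadrant so that $\arctan \circ \tan$ acts as the identity for large $k$), but this is routine. I do not anticipate a deeper obstacle: once $\gamma_k = \lfloor \cot(\pi/2^{k+1}) \rfloor$ is in hand, the floor perturbation is negligible on the relevant scale, and the lemma follows from $\arctan x = x + O(x^3)$ as $x \to 0$.
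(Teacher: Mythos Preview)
Your argument is correct and takes a genuinely different route from the paper. The paper never introduces the closed form $c_k = 2\cos(\pi/2^{k+1})$; instead it argues directly from the nested radicals that $c_k/\sqrt{2-c_{k-1}}\to\infty$, so that $\gamma_k$ and $c_k/\sqrt{2-c_{k-1}}$ are asymptotically equal, and then invokes the separately established identity \eqref{F4PWNR}, namely $\lim_{k\to\infty}2^{k-1}\arctan(\sqrt{2-c_{k-1}}/c_k)=\pi/4$, from an earlier publication. Your approach trades that external citation for the trigonometric identification $\gamma_k=\lfloor\cot(\pi/2^{k+1})\rfloor$, which makes the proof entirely self-contained and, as a bonus, yields the explicit rate $2^{k-1}\arctan(1/\gamma_k)=\pi/4+O(2^{-k})$. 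The paper's route is shorter on the page because the heavy lifting is outsourced to \eqref{F4PWNR}; yours is more transparent, and your Lipschitz estimate handles the passage from ``ratio tends to $1$'' to ``same limit after multiplying by $2^{k-1}$'' more carefully than the paper's one-line jump at that step.
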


\begin{proof}
By definition,
\[
\frac{c_k}{\sqrt{2 - c_{k - 1}}} - 1 < \gamma_k < \frac{c_k}{\sqrt {2 - c_{k - 1}}}.
\]
Since 
\[
\lim_{k\to\infty}c_k = 2
\]
while
\[
\lim_{k\to\infty} \sqrt{2 - c_{k - 1}} = 0
\]
we can infer that
\[
\lim_{k\to\infty}\frac{c_k}{\sqrt{2 - c_{k - 1}}} = \infty.
\]
Therefore, we can write
\[
\lim_{k\to\infty}\frac{\gamma_k}{\sqrt{2 - c_{k - 1}}/c_k} = 1
\]
from which it follows that
\[
\lim_{k\to\infty}\arctan\left(\frac{\sqrt{2 - c_{k - 1}}}{c_k}\right) = \lim_{k\to\infty}\arctan\left(\frac{1}{\gamma_k}\right)
\]
and proof of this lemma follows since \cite{Abrarov2018}
\begin{equation}
\label{F4PWNR} 
\lim_{k\to\infty}2^{k - 1}\arctan\left(\frac{\sqrt{2 - c_{k - 1}}}{c_k}\right) = \frac{\pi}{4}.
\end{equation}
\end{proof}

\begin{lemma}
\label{L4RA} 
The following limit
\[
\frac{\pi}{4} = \lim_{k\to\infty} \left(\frac{2^{k - 1}}{\gamma_k} + \frac{1}{\theta_{1,k}}\right) = \lim_{k\to\infty}\left(\frac{2^{k - 1}}{v_1} + \frac{v_k - 1}{v_k + 1}\right)
\]
holds.
\end{lemma}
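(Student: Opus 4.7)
The plan is to reduce the claim to the already-established two-term identity \eqref{R2TTMLF4P} together with the elementary expansion $\arctan(x)=x+O(x^3)$ as $x\to 0$, and then to observe that the second expression in the lemma is literally equal to the first.

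First I would extract enough quantitative information about $\gamma_k$. Lemma \ref{L4AF} and its proof already give $\gamma_k\to\infty$, but here I also need a growth rate. Using the closed forms $c_k=2\cos(\pi/2^{k+1})$ and $\sqrt{2-c_{k-1}}=2\sin(\pi/2^{k+1})$, one obtains
\[
\frac{c_k}{\sqrt{2-c_{k-1}}}=\cot\!\left(\frac{\pi}{2^{k+1}}\right)\sim \frac{2^{k+1}}{\pi},
\]
and hence $\gamma_k\sim 2^{k+1}/\pi$. In particular $2^{k-1}/\gamma_k^3\to 0$, which will be the decisive error bound.

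Next I would argue that $1/\theta_{1,k}\to 0$. This is forced by Lemma \ref{L4AF} combined with \eqref{R2TTMLF4P}: since $2^{k-1}\arctan(1/\gamma_k)\to\pi/4$, the residual $\arctan(1/\theta_{1,k})$ tends to $0$, so $|\theta_{1,k}|\to\infty$. Substituting the Taylor expansions $\arctan(1/\gamma_k)=1/\gamma_k+O(\gamma_k^{-3})$ and $\arctan(1/\theta_{1,k})=1/\theta_{1,k}+O(\theta_{1,k}^{-3})$ into \eqref{R2TTMLF4P} then gives
\[
\frac{\pi}{4}-\left(\frac{2^{k-1}}{\gamma_k}+\frac{1}{\theta_{1,k}}\right)=O\!\left(\frac{2^{k-1}}{\gamma_k^3}\right)+O\!\left(\frac{1}{\theta_{1,k}^3}\right),
\]
and both error terms vanish as $k\to\infty$. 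This proves the first equality.

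For the second equality I would use the identifications recorded right after \eqref{MTTMLF4P}, namely $v_1=\gamma_k$ and $v_k=(\theta_{1,k}+1)/(\theta_{1,k}-1)$. A one-line manipulation gives
\[
\frac{v_k-1}{v_k+1}=\frac{1}{\theta_{1,k}},
\]
so the two bracketed expressions in the statement are termwise identical and the second limit equals the first. The only nontrivial step I anticipate is the quantitative bound $2^{k-1}/\gamma_k^3\to 0$, since without the $\cot$ representation one could only guarantee $1/\gamma_k\to 0$, which is not enough to kill the amplification by $2^{k-1}$; everything after that is bookkeeping.
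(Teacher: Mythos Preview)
Your proposal is correct and follows the same route as the paper: combine Lemma~\ref{L4AF} with the two-term identity \eqref{R2TTMLF4P}/\eqref{MTTMLF4P} and linearize the arctangents. The paper's own proof is a single sentence (``follows immediately from Lemma~\ref{L4AF} and equation~\eqref{MTTMLF4P}'') that leaves the one nontrivial point, $2^{k-1}/\gamma_k^3\to 0$, implicit; you supply it cleanly via the closed form $c_k/\sqrt{2-c_{k-1}}=\cot(\pi/2^{k+1})$, so your argument is in fact more complete than the paper's.
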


\begin{proof}
The proof follows immediately from Lemma \ref{L4AF} and due to Equation \eqref{MTTMLF4P}.
\end{proof}

The importance of Lemma \ref{L4RA} can be seen by comparing the accuracy of two approximations,	
\begin{equation}
\label{STRA} 
\frac{\pi}{4} \approx \frac{2^{k - 1}}{\gamma_k} = \frac{2^{k - 1}}{v_1}
\end{equation}
and	
\begin{equation}
\label{DTRA} 
\frac{\pi }{4} \approx \frac{2^{k - 1}}{\gamma_k} + \frac{1}{\theta_{1,k}} = \frac{2^{k - 1}}{v_1} + \frac{v_k - 1}{v_k + 1}.
\end{equation}

Since $v_k$ is very close to unity, the double-term rational approximation \eqref{DTRA} can be simplified as follows:
\begin{equation}
\label{SDTRA} 
\frac{\pi}{4} \approx \frac{2^{k - 1}}{v_1} + \frac{v_k - 1}{2}.
\end{equation}

Mathematica code below shows how a number of digits of $\pi$ can be computed using single- and double-term rational approximations, \eqref{STRA} and \eqref{SDTRA}, respectively.

\vspace{-0.25 cm}
\small
\begin{shaded}
\begin{verbatim}
Clear[k, c, flNum, accNum, v1, vk];

(* Integer k *)
k = 3000;

(* Use if needed: $RecursionLimit = 100000; *)
$RecursionLimit = 50000;

c[0] := c[0] = 0;
c[n_] := c[n] = SetAccuracy[Sqrt[2 + c[n - 1]], k];

(* Floor number *)
flNum = Floor[c[k]/Sqrt[2 - c[k - 1]]];

(* Accuracy number *)
accNum = Length[RealDigits[flNum][[1]]];

(* Coefficient v_1 *)
v1 = SetAccuracy[flNum, 2*accNum];

Print["At k = ", k, " number of digits of \[Pi] with single term: ", 
    MantissaExponent[\[Pi] - 4 (2^(k - 1)/v1)][[2]] // Abs];

(* Compute coefficient v_k *)
vk = v1;
Do[vk = 1/2*(vk - 1/vk), k - 1];

Print["At k = ", k, " number of digits of \[Pi] with two terms: ", 
    MantissaExponent[\[Pi] - 4 (2^(k - 1)/v1 
		    + (vk - 1)/2)][[2]] //Abs];
\end{verbatim}
\end{shaded}
\normalsize
\vspace{-0.25 cm}

This code generates the following output:
\small
\begin{verbatim}
At k = 3000 number of digits of π with single term: 902
At k = 3000 number of digits of π with two terms: 1805
\end{verbatim}
\normalsize

As we can see from this example, once we know the value of the constant ${v_k}$, at $k = 3000$, the number of digits of $\pi$ is doubled from $902$ to $1805$.

\subsection{An Approximation of $\pi$ with Cubic Convergence}

Now, we show how to obtain a formula for $\pi$ with cubic convergence. Consider the following theorem.

\begin{theorem}
The two-term Machin-like Formula \eqref{R2TTMLF4P} for $\pi$ can be represented in trigonometric form as:
\[
\frac{\pi}{4} = 2^{k - 1}\arctan\left(\frac{1}{\gamma_k}\right) + \arctan\left(\frac{1 - \sin\left(2^{k - 1}\arctan\left(\frac{2\gamma _k}{\gamma_k^2 - 1}\right)\right)}{\cos\left(2^{k - 1}\arctan\left(\frac{2\gamma_k}{\gamma_k^2 - 1}\right)\right)}\right).
\]
\end{theorem}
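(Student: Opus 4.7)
The plan is to reduce the claimed identity to the already-established two-term Machin-like formula \eqref{R2TTMLF4P} combined with a standard half-angle trigonometric identity. The key observation is that the argument of the outer sine and cosine, namely $\phi := 2^{k-1}\arctan\bigl(2\gamma_k/(\gamma_k^2-1)\bigr)$, is nothing other than $2^{k}\arctan(1/\gamma_k)$ in disguise. Indeed, since $\gamma_k \geq 1$ (in fact $\gamma_k \geq 2$ for $k \geq 2$), the double-angle formula for tangent gives
\[
\tan\!\Bigl(2\arctan\!\Bigl(\tfrac{1}{\gamma_k}\Bigr)\Bigr) \;=\; \frac{2/\gamma_k}{1 - 1/\gamma_k^{2}} \;=\; \frac{2\gamma_k}{\gamma_k^{2} - 1},
\]
and the argument $1/\gamma_k \in (0,1]$ keeps us safely inside the principal branch. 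Therefore $\arctan\bigl(2\gamma_k/(\gamma_k^2 - 1)\bigr) = 2\arctan(1/\gamma_k)$, and multiplying by $2^{k-1}$ yields $\phi = 2^{k}\arctan(1/\gamma_k)$, so that $2^{k-1}\arctan(1/\gamma_k) = \phi/2$.

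Next, I would rearrange equation \eqref{R2TTMLF4P} to isolate the second arctangent term:
\[
\arctan\!\Bigl(\tfrac{1}{\theta_{1,k}}\Bigr) \;=\; \frac{\pi}{4} - 2^{k-1}\arctan\!\Bigl(\tfrac{1}{\gamma_k}\Bigr) \;=\; \frac{\pi}{4} - \frac{\phi}{2}.
\]
It then suffices to verify the purely trigonometric identity
\[
\tan\!\Bigl(\frac{\pi}{4} - \frac{\phi}{2}\Bigr) \;=\; \frac{1 - \sin\phi}{\cos\phi}.
\]
This follows from the half-angle substitutions $\sin\phi = 2\sin(\phi/2)\cos(\phi/2)$ and $\cos\phi = \cos^{2}(\phi/2) - \sin^{2}(\phi/2)$: the numerator factors as $(\cos(\phi/2)-\sin(\phi/2))^{2}$, the denominator factors as $(\cos(\phi/2)-\sin(\phi/2))(\cos(\phi/2)+\sin(\phi/2))$, and cancellation followed by dividing numerator and denominator by $\cos(\phi/2)$ gives $(1 - \tan(\phi/2))/(1 + \tan(\phi/2))$, which is precisely $\tan(\pi/4 - \phi/2)$ by the tangent subtraction formula.

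Combining the two steps, applying $\arctan$ to both sides of the trigonometric identity, and substituting back into \eqref{R2TTMLF4P} produces the claimed formula. The only real obstacle is a bookkeeping one: I must check that the angle $\pi/4 - \phi/2$ lies in the principal range $(-\pi/2, \pi/2)$ so that the arctangent inversion is legitimate rather than being off by a multiple of $\pi$. Since $\gamma_k$ is large (it grows roughly like $c_k/\sqrt{2 - c_{k-1}} \sim 2^{k}/\pi$), the quantity $\phi = 2^{k}\arctan(1/\gamma_k)$ stays close to $\pi$, so $\pi/4 - \phi/2$ stays close to $-\pi/4$, well inside $(-\pi/2, \pi/2)$; this confirms that the branch is correct and completes the proof.
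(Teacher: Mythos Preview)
Your argument is correct and takes a genuinely different route from the paper's. The paper reaches the formula by applying de~Moivre's theorem to the complex number $\kappa_1 + i\lambda_1$ introduced in Theorem~\ref{theorem2.1}, reading off the real and imaginary parts of its $2^{k-1}$-th power in polar form, and then substituting into the explicit expression~\eqref{F4B3} for $\beta = \theta_{1,k}$; this produces $\theta_{1,k} = \cos(\cdot)/(1-\sin(\cdot))$ directly, after which the argument is identified as $\arctan(2\gamma_k/(\gamma_k^2-1))$. You instead bypass the complex-number machinery entirely: the double-angle formula collapses the inner arctangent to $2\arctan(1/\gamma_k)$, and the claim then reduces to the half-angle identity $\tan(\pi/4 - \phi/2) = (1-\sin\phi)/\cos\phi$ applied to the rearrangement of~\eqref{R2TTMLF4P}. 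Your approach is more elementary and self-contained, needing nothing beyond~\eqref{R2TTMLF4P} and standard trigonometric identities; the paper's approach has the advantage of explaining \emph{where} the $\cos/(1-\sin)$ structure originates, namely as real part over one minus imaginary part in the de~Moivre expansion, which ties the result back to the earlier iterative framework.

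One small slip in your branch check: since $2^{k-1}\arctan(1/\gamma_k)\to\pi/4$ (Lemma~\ref{L4AF}), the angle $\phi = 2^{k}\arctan(1/\gamma_k)$ is close to $\pi/2$, not $\pi$, and hence $\pi/4 - \phi/2$ is close to $0$ rather than $-\pi/4$. This is harmless, as $0$ sits even more comfortably inside $(-\pi/2,\pi/2)$, so the principal-branch verification still goes through.
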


\begin{proof}
Applying de Moivre's theorem, we can write the following:
\[
\begin{aligned}
(\kappa_1 + i\lambda_1)^{2^{k - 1}} &= \left(\kappa_1^2 + i\lambda_1^2\right)^{2^{k - 2}}\left(\cos \left(2^{k - 1}{\rm Arg}(\kappa_1 + i\lambda_1)\right)\right) \\
&+i\sin\left(2^{k - 1}{\rm Arg}(\kappa_1 + i\lambda_1)\right)
\end{aligned}
\]

Substituting the equation above into Equation \eqref{F4B3}, and taking into consideration that $\theta_{1,k} = \beta$, we get
\[
\theta_{1,k} = \frac{\cos\left(2^{k - 1}{\rm Arg}(\kappa_1 + i\lambda_1)\right)}{1 - \sin\left(2^{k - 1}{\rm Arg}(\kappa_1 + i\lambda_1)\right)}.
\]

Since
\[
{\rm Arg}(x + iy) = \arctan\left(\frac{y}{x}\right), \qquad x > 0,
\]
it follows that
\[
{\rm Arg}(\kappa_1 + i\lambda_1) = \arctan\left(\frac{2\gamma_k}{\gamma_k^2 - 1}\right), \qquad x > 0.
\]
Consequently, Equation \eqref{F4B3} can be represented as
\begin{equation}
\label{TF4T} 
\theta_{1,k} = \frac{\cos\left(2^{k - 1}\arctan\left(\frac{2\gamma _k}{\gamma _k^2 - 1}\right)\right)}{1 - \sin\left(2^{k - 1}\arctan\left(\frac{2\gamma_k}{\gamma_k^2 - 1}\right)\right)}
\end{equation}
and this completes the proof.	
\end{proof}

We may attempt to approximate the coefficient $\theta_{1,k}$. Since
\[
\frac{2\gamma_k}{\gamma_k^2 - 1}\to\frac{2}{\gamma_k}
\]
with increasing $k$, Equation \eqref{TF4T} can be approximated as follows:
\[
\theta_{1,k} \approx \frac{\cos\left(2^{k - 1}\arctan\left(\frac{2}{\gamma_k}\right)\right)   
}{1 - \sin\left(2^{k - 1}\arctan\left(\frac{2}{\gamma_k}\right)\right)}.
\]

Since
\[
\arctan(x) \approx x, \qquad |x| \ll 1,
\]
the approximation above can be further simplified as follows:
\begin{equation}
\label{SF4T} 
\theta_{1,k} \approx \frac{\cos\left(\frac{2^k}{\gamma_k}\right)}{1 - \sin\left(\frac{2^k}{\gamma_k} \right)}.
\end{equation}

Using the following trigonometric identities,
\[
\cos(x) = \frac{2\tan(x/2)}{1 + \tan^2(x/2)}
\]
and
\[
\sin(x) = \frac{1 - \tan^2(x/2)}{1 + \tan^2(x/2)}
\]
Equation \eqref{SF4T} can be expressed as follows:
\[
\theta_{1,k} \approx \frac{1 + \tan^2\left(2^{k - 1}/\gamma_k\right)}{1 - \tan\left(2^{k - 1}/\gamma_k \right)}.
\]
This results in
\begin{equation}
\label{A4RT} 
\frac{1}{\theta_{1,k}} \approx \frac{1 - \tan \left(2^{k - 1}/\gamma_k\right)}{1 + \tan^2\left(2^{k - 1}/\gamma_k\right)}.
\end{equation}

Since
\[
\tan^2\left(2^{k - 1}/\gamma_k\right)
\]
converges faster to $1$ than
\[
\tan\left(2^{k - 1}/\gamma_k\right),
\]
we can simplify approximation \eqref{A4RT} as follows:
\begin{equation}
\label{SF4RT} 
\frac{1}{\theta_{1,k}} \approx \frac{1}{2}\left(1 - \tan\left(\frac{2^{k - 1}}{\gamma_k}\right)\right).
\end{equation}

Substituting approximation \eqref{SF4RT} into Equation \eqref{R2TTMLF4P}, we can approximate the two-term Machin-like Formula \eqref{MF4P} for $\pi$ as follows:
\[
\frac{\pi}{4} \approx 2^{k - 1}\arctan\left(\frac{1}{\gamma_k}\right) + \arctan\left(\frac{1}{2}\left(1 - \tan \left(\frac{2^{k - 1}}{\gamma_k}\right)\right)\right).
\]
Consequently, from the Lemma \ref{L4RA}, we have the following:
\begin{equation}
\label{MA4PWTF} 
\frac{\pi}{4} \approx \frac{2^{k - 1}}{\gamma_k} + \frac{1}{2}\left(1 - \tan\left(\frac{2^{k - 1}}{\gamma_k}\right)\right).
\end{equation}

Consider now the following limit: 
\[
\lim_{x\to\pi/4} \frac{{2\sin^2}(x)}{\tan(x)} = 1.
\]
Since the numerator of this limit is very close to its denominator near the vicinity of $\pi/4$, we may replace the tangent function in approximation \eqref{MA4PWTF} with twice the square of the sine function. This leads to
\[
\frac{\pi}{4} \approx \frac{2^{k - 1}}{\gamma_k} + \frac{1}{2}\left(1 - 2\sin^2\left(\frac{2^{k - 1}}{\gamma_k}\right)\right)
\]
or
\[
\frac{\pi}{4} \approx \frac{2^k}{2\gamma_k} + \frac{1}{2}\cos\left(\frac{2^k}{\gamma_k}\right)
\]
or
\begin{equation}
\label{A4PWCF} 
\frac{\pi}{2}\approx \frac{2^k}{\gamma_k} + \cos\left(\frac{2^k}{\gamma_k}\right).
\end{equation}

Equation \eqref{A4PWCF} provides a hint for the computation of $\pi$. Using the change in the variable such that
\[
\frac{2^k}{\gamma_k}\to a_1,
\]
we may attempt to compute $\pi$ using the following iteration:
\begin{equation}
\label{IF4PWCC} 
a_n = a_{n - 1} + \cos(a_{n - 1})
\end{equation}
resulting in
\[
\lim_{n\to\infty} a_n = \frac{\pi}{2}.
\]

Although the iteration Formula \eqref{IF4PWCC} is obtained absolutely using a heuristic method, we can prove its cubic convergence to $\pi/2$.

\begin{theorem}
The convergence rate of the iterative Formula \eqref{IF4PWCC} to $\pi/2$ is cubic.
\end{theorem}

\begin{proof}
Let
\[
a_n = \frac{\pi}{2} + \varepsilon_n,
\]
where $\varepsilon_n$ is an error term occurring at step $n$. Then, taking the Maclaurin series expansion of the cosine function, we have
\[
\cos\left(\frac{\pi}{2} + \varepsilon_n\right) = -\varepsilon_n + \frac{\varepsilon_n^3}{6} - \frac{\varepsilon_n^5}{120} + \frac{\varepsilon_n^7}{5040} - \frac{\varepsilon_n^9}{362880} + \ldots. 
\]
Consequently, we get
\[
a_{n + 1} = \frac{\pi}{2} + \varepsilon_n + \cos\left(\frac{\pi}{2} + \varepsilon_n\right) = \frac{\pi}{2} + \varepsilon_n -\varepsilon_n + \frac{\varepsilon_n^3}{6} + O\left(\varepsilon_n^5\right),
\]
leading to cubic convergence, since the next error term becomes
\[
\varepsilon_{n + 1} = \frac{\varepsilon_n^3}{6} + O\left(\varepsilon_n^5\right).
\]
Thus, we can conclude that if $|\varepsilon_{1}| < 1$, then all consecutive error terms 
\[
\varepsilon_{1} > \varepsilon_{2} > \varepsilon_{3} > \ldots \varepsilon_{n} > \varepsilon_{n + 1} > \varepsilon_{n + 2} > \ldots > 0
\]
tend to zero with increasing $n$ at a cubical convergence rate. This completes the proof.
\end{proof}

The following is a Mathematica code for computing digits of $\pi$ with the help of the iterative Formul \eqref{IF4PWCC}.
\vspace{-0.25 cm}
\small
\begin{shaded}
\begin{verbatim}
Clear[a, accNum, n];

(* Initial accuracy number *)
accNum = 10;

(* Initial value of π/2 *)
a = SetAccuracy[3.145926/2, accNum];

Print["--------------------------------------"];
Print["Iteration n", "  |  ", "Number of digits of π"];
Print["--------------------------------------"];

(* Iteration *)
For[n = 1, n <= 8, a = SetAccuracy[a + Cos[a], accNum]; 
    Print[n, "            |  ", MantissaExponent[Pi 
        - 2*a][[2]]//Abs]; accNum = 5*accNum; n++];

Print["--------------------------------------"];
\end{verbatim}
\end{shaded}
\normalsize
\vspace{-0.25 cm}

The output of this code:
\small
\begin{verbatim}
--------------------------------------
Iteration n | Number of digits of π
--------------------------------------
1           | 8
2           | 26
3           | 76
4           | 232
5           | 698
6           | 2095
7           | 6288
8           | 18868
--------------------------------------
\end{verbatim}
\normalsize
shows that the iterative Formula \eqref{IF4PWCC} provides cubic convergence since the number of digits of $\pi$ increases by a factor of $3$ after each iteration.

Another iterative algorithm that triples the number of correct digits of $\pi$ after each cycle of iteration is based on a cubic modular equation \cite{Berggren2004, Borwein1984, Borwein1986}. However, despite extremely rapid convergence, both algorithms are not optimal for computing digits of $\pi$. Specifically, in Equation \eqref{IF4PWCC}, the argument of the cosine function is relatively large, as $a_n$ tends to
$$
\frac{\pi}{2} = 1.5707963267\dots
$$
with increasing $n$. As a result, this requires a very large number of the summation terms in the power expansion of the cosine function. On the other hand, the iterative algorithm based on a cubic modular equation generates irrational (surd) numbers at each iteration cycle. This considerably decelerates computation, since determination of the irrational numbers over and over again at each cycle of iteration is itself a big challenge. Therefore, the algorithms, based on Chudnovsky and Machin-like formulas, were mostly used to beat the records in the history of $\pi$. The detailed chronology of world records in computing digits of $\pi$ can be found in \cite{Agarwal2013}.

\subsection{A Numerical Solution for Nested Radicals with Roots of $2$}

As we can see, approximation \eqref{SDTRA} doubles the number of digits of  $\pi$. However, this approach can be used beyond computing digits of $\pi$. It can also be used to compute nested radicals consisting of square roots of $2$. In particular, using iterative Formula \eqref{NIF}, we can compute nested radicals with roots of $2$ as follows:
\[
\frac{v_k}{v_{k - n}} \approx \frac{\sqrt{2 - c_n}}{c_{n + 1}}.
\]

The Mathematica code below shows how to generate digits of nested radicals with roots of $2$ at $n = 3$:
\[
\frac{v_k}{v_{k - 3}}\approx \frac{\sqrt{2 - c_3}}{c_4} = \frac{\sqrt{2 - \sqrt{2 + \sqrt{2 + \sqrt 2}}}}{\sqrt{2 + \sqrt{2 + \sqrt{2 + \sqrt 2}}}}.
\]
\vspace{-0.25 cm}
\begin{shaded}
\small
\begin{verbatim}
Clear[k, c, flNum, accNum, v]

(*Assign value of k*)
k = 5000;

(* Increase if needed: $RecursionLimit = 100000; *)
$RecursionLimit = 50000;

(* Define nested radicals *)
c[0] := c[0] = 0;
c[n_] := c[n] = SetAccuracy[Sqrt[2 + c[n - 1]], k];

(* Compute floor number *)
flNum = Floor[c[k]/Sqrt[2 - c[k - 1]]];

(*Set accuracy with accuracy number*)
accNum = Length[RealDigits[flNum][[1]]];

(*Compute v_k*)
v[1] := v[1] = SetAccuracy[Floor[flNum], accNum];
v[n_] := v[n] = 1/2*(v[n - 1] - 1/v[n - 1]);

Print[MantissaExponent[v[k]/v[k - 3] - Sqrt[2 - c[3]]/c[4]][[2]] // 
    Abs, " computed digits of nested radical"];
\end{verbatim}
\end{shaded}
\normalsize
\vspace{-0.25 cm}

This code produces the following output:
\small
\begin{verbatim}
1506 computed digits of nested radical
\end{verbatim}
\normalsize

It is interesting to note that due to relation
\[
\frac{v_k}{v_{k - 1}} \approx \frac{\sqrt{2 - \sqrt 2}}{\sqrt{2 + \sqrt 2}} = {\sqrt 2} - 1
\]
we can compute $\sqrt{2}$ and its total numbers of correct digits using the following command lines:
\vspace{-0.25 cm}
\small
\begin{shaded}
\begin{verbatim}
Print["Computed square root of 2 is ", N[1 + v[k]/v[k - 1], 20],"..."];

Print[MantissaExponent[(v[k]/v[k - 1] + 1) - Sqrt[2]][[2]] //
    Abs, " computed digits of square root of 2"];
\end{verbatim}
\end{shaded}
\normalsize
\vspace{-0.25 cm}
This Mathematica command line generates the following output:
\small
\begin{verbatim}
Computed square root of 2 is 1.4142135623730950488...
1506 computed digits of square root of 2
\end{verbatim}
\normalsize

There are several different methods \cite{Nimbran1999, Priestley1999, Kaushik2024} that can be used for computing digits of ${\sqrt 2}$. However, the method of computation that we developed here can be implemented beyond ${\sqrt 2}$ for efficient computation of the nested radicals with roots of $2$ of kind
\[
\frac{\sqrt{2 - c_{n - 1}}}{c_n}.
\]
These nested radicals are utilized in formulas like \eqref{F4PWNR} and \cite{Abrarov2017b}
\[
\frac{\pi}{4} = \sum_{k \ge 2}2^{k - 1}\arctan\left(\frac{\sqrt{2 - c_{k - 1}}}{c_k}\right).
\]
Furthermore, due to relation
\[
\frac{\sqrt{2 - c_{n - 1}}}{c_n} \approx \frac{1}{2}\sqrt{2 - c_{n - 1}}, \qquad n \gg 1,
\]
this technique can also be applied for computation of the nested radicals with roots of $2$ of kind
\[
\sqrt{2 \pm \sqrt{2 + \sqrt{2 + \sqrt{2 + \ldots}}}}.
\]
as an alternative to other known methods \cite{Campbell2023, Cho2025, Servi2003, Zimmerman2008}.

It should be noted that using the methodology described in our publication \cite{Abrarov2017b}, we can also apply nested radicals with roots of $2$ as
\[
\frac{\pi}{4} = 2^{k - 1}\sum_{n \ge k + 1}\arctan\left(\frac{\sqrt{2 - c_{n - 1}}}{c_n}\right)
\]
or
\[
\pi = 2^k\sum_{n \ge k}\arctan\left(\frac{\sqrt{2 - c_{n - 1}}}{c_n}\right),
\]
from which the following limit
\[
\pi = \lim_{k \to \infty} {2^k\sum_{n \ge k}\frac{\sqrt{2 - c_{n - 1}}}{c_n}}
\]
can be obtained.

\subsection{Computation via Arbitrary-Precision Arithmetic}

Consider how Equation \eqref{MTTMLF4P} can be used for computing digits of $\pi$. Rather than to apply it directly for computing digits of $\pi$, we can apply Equation \eqref{ET} to transform the following quotient into reciprocal integers (compare equations \eqref{ITTMLF4P} and \eqref{STMLF4P} above before and after transformation):
\begin{equation}
\label{F4OOT} 
\frac{1}{\theta_{1,k}} = \frac{v_k - 1}{v_k + 1}
\end{equation}
Another approach to compute $\pi$ is to approximate $\theta_k$ directly during the process of iteration.

The following Mathematica code shows how to compute digits of $\pi$ by approximating Equation \eqref{F4OOT} in the iteration process.

\vspace{-0.25 cm}
\small
\begin{shaded}
\begin{verbatim}
Clear[k, accNum, c, v1, vk, n];

(* Integer k *)
k = 50;

(* Define array of accuracy numbers *)
accNum = {100000, 200000, 300000, 400000, 500000};

(* Define nested radicals *)
c[0] := c[0] = 0;
c[n_] := c[n] = SetAccuracy[Sqrt[2 + c[n - 1]], k];

n = 1;
Do[
  (* Setting accuracy *)
  v1 = SetAccuracy[Floor[c[k]/Sqrt[2 - c[k - 1]]], accNum[[n]]];

  (* Computing v_k *)
  vk = v1; Do[vk = 1/2*(vk - 1/vk), k - 1];

  Print["n = ", n, ", ", MantissaExponent[\[Pi] - 
      4*(2^(k - 1)*ArcTan[1/v1] + ArcTan[(vk - 1)/(vk + 1)])][[2]] //
          Abs, " digits of π"]; n++, 5];
\end{verbatim}
\end{shaded}
\normalsize
\vspace{-0.25 cm}

The output of this code is as follows:
\small
\begin{verbatim}
n = 1, 100014 digits of π
n = 2, 200014 digits of π
n = 3, 300014 digits of π
n = 4, 400014 digits of π
n = 5, 500014 digits of π
\end{verbatim}
\normalsize

The number of digits in the Mathematica code above is determined by a list variable {\bf {\ttfamily accNum}} consisting of a sequence of $5$ numbers, given as follows:
\small
\begin{verbatim}
accNum = {100000, 200000, 300000, 400000, 500000};
\end{verbatim}
\normalsize
If we increase these numbers, the number of correct digits of $\pi$ increases by the same factors. From this example, we can see that by increasing the parameters in the list variable {\bf {\ttfamily accNum}}, we can archive an arbitrary convergence rate.

\section{Determination of the Constant \boldmath{$\theta_{1,k}$}}

Another advantage of the new iterative Formula \eqref{NIF} is its considerably faster computation of the constant $\theta_{1,k}$. Consider as an example $k = 19$. The corresponding two-term Machin like formula for $\pi$ is given as follows:
\begin{equation}
\label{K19}
\begin{aligned}
\frac{\pi}{4} &= 2^{19 - 1}\arctan\left(\frac{1}{\gamma_{19}}\right) - \arctan\left(\frac{1}{\theta_{1,19}}\right) \\
&= 262144\arctan\left(\frac{1}{333772}\right) - \arctan\left(\frac{2519905905\,\ldots\,6547219457}{3573486530\,\ldots\,4866301951}\right),
\end{aligned}
\end{equation}
where the second arctangent term consists of $1\text{,}447\text{,}940$ and $1\text{,}447\text{,}933$ digits in its numerator and denominator, respectively.

The following Mathematica code is built on the two-step iterative Formula
\eqref{TSI}:
\vspace{-0.25 cm}
\small
\begin{shaded}
\begin{verbatim}
Clear[t, k, c, alpha, kappa, lambda, theta, str];

t = AbsoluteTiming[k = 19;

(* Nested radicals with roots of 2 *)
c[0] := c[0] = 0;
c[n_] := c[n] = Sqrt[2 + c[n - 1]];

(* First coefficient *)
alpha = Floor[c[k]/Sqrt[2 - c[k - 1]]];

(* Initial values for two-step iteration *)
kappa = (alpha^2 - 1)/(alpha^2 + 1);
lambda = (2*alpha)/(alpha^2 + 1);

Print["Computing, please wait..."];

(* Two-step iteration *)
Do[x = kappa^2 - lambda^2; y = 2*kappa*lambda;
    kappa = x; Clear[x]; lambda = y; Clear[y], k - 1];

theta = kappa/(1 - lambda);][[1]];

(* Converting to stings *)
strNumer = ToString[Numerator[theta]];
strDenom = ToString[Denominator[theta]];

Print["At k = ", k," the run-time is ", t," seconds"]

(* String for second coefficient *)
str = ToString[Subscript[\[Theta], 1, k], StandardForm];

(* Formatting output *)
If[k <= 5, Print[str," = ", theta], Print[str," = ",
    -StringJoin[StringPart[strNumer, 2 ;; 11], "...",
        StringPart[strNumer,-10 ;; -1]]/StringJoin[StringPart[strDenom,
            2 ;; 11], "..." , StringPart[strDenom, -10 ;; -1]]]];
\end{verbatim}
\end{shaded}
\vspace{-0.25 cm}
\normalsize
\noindent This code returns the following output:
\small
\begin{alltt}
Computing, please wait...
-----------------------------------------
\(\theta\sb{1,19} = -\frac{2519905905\,...\,6547219457}{3573486530\,...\,4866301951}\)
-----------------------------------------
At k = 19 the run-time is 10.3824 s
\end{alltt}
\normalsize
Thus, this code takes about $10$ s to generate the constant $\theta_{1,19}$ for Formula \eqref{K19}.

The next box below shows the Mathematica code implemented on the basis of the new iterative Formula
\eqref{NIF}:
\vspace{-0.25 cm}
\small
\begin{shaded}
\begin{verbatim}
Clear[t, k, c, v, theta, str];

t = AbsoluteTiming[k = 19;

(* Nested radicals with roots of 2 *)
c[0] := c[0] = 0;
c[n_] := c[n] = Sqrt[2 + c[n - 1]];

(* First coefficient *)
v = Floor[c[k]/Sqrt[2 - c[k - 1]]];

Print["Computing, please wait..."];

(* New iterative formula *)
Do[v = 1/2*(v - 1/v), k - 1]; theta = (v + 1)/(v - 1);][[1]];

(* Converting to stings *)
strNumer = ToString[Numerator[theta]];
strDenom = ToString[Denominator[theta]];

Print["At k = ", k," the run-time is ", t," seconds"]

(* String for second coefficient *)
str = ToString[Subscript[\[Theta], 1, k], StandardForm];

(* Formatting output *)
If[k <= 5, Print[str," = ",theta], Print[str," = ",
    -StringJoin[StringPart[strNumer, 2 ;; 11], "...",
        StringPart[strNumer,-10 ;; -1]]/StringJoin[StringPart[strDenom,
            2 ;; 11], "..." , StringPart[strDenom,-10 ;; -1]]]];
\end{verbatim}
\end{shaded}
\vspace{-0.25 cm}
\normalsize
\noindent The output of this code is as follows:
\small
\begin{alltt}
Computing, please wait...
-----------------------------------------
\(\theta\sb{1,19} = -\frac{2519905905\,...\,6547219457}{3573486530\,...\,4866301951}\)
-----------------------------------------
At k = 19 the run-time is 1.98505 s
\end{alltt}
\normalsize
This code generates the same constant $\theta_{1,19}$ after around $2$ s. As we can see, the implementation of the new iterative Formula \eqref{NIF} performs $5$ times faster.

The acceleration by a factor of $5$ remains in a wide range of $k$ values. Specifically, we continued the run-time test up to $k = 27$. To generate the constant $\theta_{1,27}$ (see Equation \eqref{K27}), the code based on the two-step iterative Formula \eqref{TSI} takes almost $4$ h, while the code based on the new iterative Formula \eqref{NIF} requires about $45$ min.

All computations were performed on a typical laptop computer with Intel CORE i3, and 16 GB of RAM. The values of $k$ in the last two Mathematica codes may be modified accordingly from $k = 2$ up to $k = 27$. The value of $k$ can be increased further on laptop/desktop computers with larger RAM capacities.

\section{Conclusions}

An iterative Formula \eqref{NIF} that can be used for computing $\pi$ and nested radicals with roots of $2$ is derived. It is shown how this iterative formula can be implemented to generate and approximate the two-term Machin-like formulas for $\pi$. Some examples with Mathematica code are provided.

\section*{Acknowledgment}

This work was supported by National Research Council Canada, Thoth Technology Inc., York University and Epic College of Technology.

\end{document}